\documentclass[12pt]{article}
\usepackage{amsmath,amssymb,graphicx,amsthm}
\usepackage{subfigure}

\newtheorem{theorem}{Theorem}[section]
\newtheorem{lemma}[theorem]{Lemma}
\newtheorem{obs}[theorem]{Observation}
\newtheorem{corollary}[theorem]{Corollary}
\newtheorem{problem}[theorem]{Problem}
\newtheorem{conjecture}[theorem]{Conjecture}
\newtheorem{claim}[theorem]{Claim}
\newtheorem{definition}[theorem]{Definition}

\newcommand\cref[1]{Corollary~\ref{cor:#1}}

\newcommand\chol[0]{\chi_{P}}
\newcommand\cholb[0]{\chi_{P,b}}
\newcommand\cholf[0]{\chi_{P,f}}
\newcommand{\olch}{\chi_P}
\newcommand{\NN}{\mathbb{N}}

\DeclareMathOperator{\ch}{ch}

\textheight=8in \textwidth=6.5in \topmargin=0.3in \oddsidemargin=0in
\evensidemargin=0in

\usepackage{color}

\begin{document}

\title{Choosability and paintability of the lexicographic product of graphs}
\author{Bal\'azs Keszegh\thanks{Alfr\'ed R\'enyi Institute of
Mathematics, P.O.B. 127, Budapest H-1364,
Hungary. Email: keszegh.balazs@renyi.mta.hu.
Research supported by Hungarian National Science Fund (OTKA), under grant PD 108406 and by the J\'anos Bolyai Research Scholarship of the Hungarian Academy of Sciences.}
\and Xuding Zhu\thanks{Department of Mathematics,
Zhejiang Normal University, China. Grant numbers:
NSFC No. 11571319.    Email: xudingzhu@gmail.com}}

\maketitle

\begin{abstract}
This paper studies the choice number and paint number of the lexicographic product of graphs. We prove that if $G$ has maximum degree $\Delta$, then for
any graph $H$ on $n$ vertices $\ch(G[H]) \le (4\Delta+2)(\ch(H) +\log_2 n)$ and $\olch(G[H]) \le (4\Delta+2)
(\olch(H)+ \log_2 n)$.
\end{abstract}

\section{Introduction}

A {\em list assignment } $L$ of a graph $G$ assigns to each vertex
$v$ a set $L(v)$ of permissible colors. An {\em $L$-coloring} of $G$
is a proper vertex coloring of $G$ which colors each vertex with
one of its permissible colors. We say that $G$ is {\em
$L$-colorable} if there exists an $L$-coloring of $G$. Given a
function  $f: V(G) \to \NN$, we say $G$ is   {\em $f$-choosable} if
for every list assignment  $L$ with $|L(v)|=f(v)$ for all $v \in
V(G)$, $G$ is $L$-colorable. We say $G$ is $k$-choosable if for the
constant function $f\equiv k$, $G$ is $f$-choosable.
 The {\em choice number} of $G$ is
$\ch(G)=\min\{k: G$ is $k$-choosable$\}$. List coloring of
graphs has been studied extensively in the literature  (cf.
\cite{Vizing76,ERT79,Tuza97}).

More generally, we say that $G$ is \emph{$(a, b)$-choosable} for some integers $a$ and $b$, $a\ge 2b > 1$, if, for any assignment of lists with $|L(v)| = a$ for all $v \in V$, there are subsets $C(v) \subset L(v)$ with $|C(v)| = b$ such that $C(u)$ and $C(v)$ are disjoint for all pairs of adjacent vertices $u$ and $v$ (the sets $C(v)$ form a \emph{$b$-fold $L$-coloring}). The $b$-choice number of a graph is $\ch_b(G)=\min\{a: G$ is $(a,b)$-choosable$\}$.

Investigation of online variants of combinatorial problems is very common. One reason is that motivations from practical applications and from computer science put emphasis on the algorithmic aspects. Furthermore, besides having direct implications, on-line variants usually also give a deeper insight into the original offline problem. As for many other coloring problems, on-line variant of list coloring came into the attention of researchers.

Assume that $\cup_{v \in V(G)} L(v) = \{1,2,\ldots, q\}$ for some
integer $q$. For $i=1,2,\ldots, q$, let $V_i = \{v: i \in L(v)\}$.
The sequence $(V_1,V_2, \ldots, V_q)$ is   another way of specifying
the list assignment.   An $L$-coloring of $G$ is equivalent to a
sequence $(X_1, X_2, \ldots, X_q)$ of independent sets that form a
partition of $V(G)$ and such that $X_i \subseteq V_i$ for
$i=1,2,\ldots, q$. This point of view of list coloring motivates the
definition of on-line ($b$-fold) list coloring in the form of the following painting game on a graph $G$, which
was introduced in \cite{Schauz2009,Schauz2010b}.

\begin{definition}
	Given a finite graph $G$ and two mappings $f, g: V(G) \to \mathbb{N}$, the
	$g$-fold $f$-painting  game on $G$ is played by two players, Lister and
	Painter. In the $i$-th round, Lister presents a non-empty subset
	$V_i$ of $V(G)$, where each $v \in V_i$ is contained in less than $g(v)$ of the $X_j$'s for $j \le i-1$,
	and  Painter chooses an independent set $X_i$
	contained in $V_i$.  If   at the
	end of some round,  a vertex $v$ is contained in $f(v)$ of the sets $V_i$'s
	but contained in less than $g(v)$ of the sets $X_i$'s, then the
	game ends and Lister wins the game. Otherwise, at some round, each
	vertex $v$ is contained in $g(v)$ of the sets $X_i$, the game ends and Painter  wins the
	game.		
\end{definition}

If $v \in V_i$, then we say color $i$ is a permissible color of vertex $v$. If $v \in X_i$, then we say $v$ is
colored by color $i$. 

 We say $G$ is {\em $g$-fold $f$-paintable} if Painter
has a winning strategy in the $g$-fold $f$-painting game on $G$, and we say
   $G$ is  {\em $b$-fold $k$-paintable} if $G$ is $g$-fold $f$-paintable for the constant functions $f\equiv k$ and $g\equiv b$.
The {\em $b$-paint number} of $G$, denoted by $\chi_{P,b}(G)$, is
the least integer $k$ such that $G$ is $b$-fold $k$-paintable.
The {\em paint number} of $G$, denoted by $\chi_{P}(G)$, is
the least integer $k$ such that $G$ is  $1$-fold $k$-paintable.

Many papers studied the connection of this on-line list coloring with the usual list coloring.
First, it follows from the definition that for any graph $G$, $\olch(G) \ge
\ch(G)$ and $\ch_b(G)\le \cholb(G)$. It was proved in \cite{chip} that the difference $\olch(G) -  \ch(G)$  can be  arbitrarily big
 for complete bipartite graphs. On the other hand, many currently known upper
bounds for the choice numbers of  classes of graphs  remain upper
bounds for their paint number. For example, the paint number of
planar graphs is at most $5$ \cite{Schauz2009}, the paint number of
planar graphs of girth at least $5$ is at most $3$
\cite{Schauz2009,CZ2012}, the paint number of the line graph $L(G)$
of a bipartite graph $G$ is  $\Delta(G)$ \cite{Schauz2009}, the maximum degree of $G$, and if
$G$ has an orientation in which the number of even Eulerian\footnote{A spanning subgraph of a directed graph is 
even (resp. odd) Eulerian if it has an even (resp. odd) number of edges, and for each vertex its  in-degree   equals its  out-degree.}
subgraphs differs from the number of odd Eulerian subgraphs and
$f(x) = d^+(x)+1$, then $G$ is $f$-paintable
\cite{Schauz2010b}.

To avoid confusion we mention that a different list coloring game, 
\emph{game list coloring}, was investigated by Bo\-ro\-wi\-ec\-ki, Sidorowicz  and Tuza \cite{GameCol}. 
In their list coloring game, the graph and the lists are given at the beginning of the game,
and both players colour vertices 
with colours from their lists, however, with opposite goals.  

\bigskip
Various graph products, such as direct product, Cartesian product, strong product and lexicographic product, are important and popular methods of constructing new graphs from old ones. It is interesting to understand how a graph parameter of a product graph
relates to graph parameters of the factor graphs. In this paper, we are interested in the choice number and the paint number of
the lexicographic product of graphs.

\begin{definition}
Let $G=(V_1,E_1)$ and $H=(V_2,E_2)$ be two graphs. The
\textit{lexicographic product} of $G$ and $H$ is the graph $G[H]$
with vertex set $V_1 \times V_2$ and $(v_1,v_2)$ is joined to
$(v_1',v_2')$ if either $(v_1,v_1') \in E_1$ or $v_1=v_1'$ and
$(v_2,v_2') \in E_2$.
\end{definition}


Lexicographic products have a close connection with fractional colorings.
The \emph{fractional chromatic number} of a graph $G$ is defined as $\chi_f(G)=\inf \chi(G[K_n])/n$,
where the infimum is over all $n$.  The \emph{fractional choice number} or
\emph{choice ratio} is defined as $\ch_f(G) = \inf\{\ch_b(G)/b\}=\inf\{a/b : G$
is $(a, b)$-choosable$\}$.
In the same vein, {\em fractional paint number} is defined as $\cholf(G) = \inf\{\cholb(G)/b\}$.
Note that the $b$-choice number can be imagined as a
restricted version of the choice number of the lexicographic product $G[K_b]$,
in which the lists are restricted such that all vertices of a copy of $K_b$ have
the same set of permissible colors. Thus  $\ch_b(G)\le \ch(G[K_b])$. Similarly,
 $\chi_{P,b}(G)\le \chi_P(G[K_b])$.

It follows from the definition that $\chi_f(G) \le \ch_f(G)$ for all graphs $G$.
Although $\ch(G)-\chi(G)$ can be arbitrarily large,
 Alon, Tuza and Voigt
\cite{ATV97} showed the equality  $\ch_f(G) = \chi_f(G)$ holds for all graphs $G$.
This result was further strengthened by Gutowski \cite{G2011}, who showed that for any graph $G$,
its fractional paint number also equals the fractional chromatic number.
About the choice number of lexicographic products our knowledge is much more limited.
In this paper we investigate the choice number of lexicographic products along with its
 on-line variant, the paint number of lexicographic products.

\subsection{Our results and discussion}

First we summarize the trivial relations between the coloring numbers we are interested in:
\begin{obs}
	$\chi(G[H])\le \ch_{\chi(H)}(G)\le \ch(G[H])\le \chol(G[H])$.
\end{obs}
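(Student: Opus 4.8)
The plan is to treat the three inequalities separately, since each rests on a short structural observation about $G[H]$; the only genuinely substantive ingredient, used twice, is that distinct copies of $H$ sitting over adjacent vertices of $G$ are \emph{completely joined}, while a single copy induces $H$.

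The rightmost inequality $\ch(G[H]) \le \chol(G[H])$ requires no work beyond a citation: the introduction already records that $\ch(F) \le \chol(F)$ for every graph $F$, so applying this with $F = G[H]$ gives the claim immediately.

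For the leftmost inequality $\chi(G[H]) \le \ch_{\chi(H)}(G)$, write $a = \ch_{\chi(H)}(G)$ and $b = \chi(H)$. I would apply $(a,b)$-choosability of $G$ to the constant lists $L(v) = \{1,\dots,a\}$, obtaining sets $C(v) \subseteq \{1,\dots,a\}$ with $|C(v)| = b$ that are pairwise disjoint across edges of $G$. Fixing a proper $b$-coloring $c$ of $H$ and, for each $v$, a bijection $\sigma_v$ from $\{1,\dots,b\}$ onto $C(v)$, I would define $\Phi(v,w) = \sigma_v(c(w))$ and verify properness: for an edge inside a copy ($v=v'$, $(w,w')\in E_2$) properness of $c$ and injectivity of $\sigma_v$ give distinct colors, while for an edge across copies ($(v,v')\in E_1$) the disjointness $C(v)\cap C(v')=\emptyset$ forces distinct colors because every vertex of one copy is joined to every vertex of the other. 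As $\Phi$ uses at most $a$ colors, $\chi(G[H]) \le a$.

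For the middle inequality $\ch_{\chi(H)}(G) \le \ch(G[H])$, set $a = \ch(G[H])$ and let $L$ be an arbitrary list assignment on $G$ with $|L(v)| = a$. I would lift $L$ to $G[H]$ by assigning each vertex $(v,w)$ the list $L(v)$, and use $a$-choosability of $G[H]$ to get a proper coloring $\phi$; then set $C(v) = \{\phi(v,w) : w \in V(H)\} \subseteq L(v)$. The crux is that the vertices $\{(v,w)\}_w$ induce a copy of $H$, so $\phi$ restricted there is a proper coloring of $H$ and hence $|C(v)| \ge \chi(H)$, while for $(v,v')\in E_1$ the completely-joined structure gives $C(v)\cap C(v')=\emptyset$. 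Truncating each $C(v)$ to exactly $\chi(H)$ colors exhibits a $\chi(H)$-fold $L$-coloring of $G$, so $G$ is $(a,\chi(H))$-choosable and $\ch_{\chi(H)}(G)\le a$. I expect no real obstacle beyond reconciling the exact size $|C(v)| = b$ demanded by the definition of $(a,b)$-choosability with the inequality $|C(v)| \ge \chi(H)$ produced here, which is handled by truncating to exactly $\chi(H)$ elements; the rest is bookkeeping around the join structure of the lexicographic product.
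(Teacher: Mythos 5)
Your proposal is correct, and it is essentially the argument the paper intends: the paper states this observation without proof as a triviality, with the key idea appearing in the surrounding text (the remark that $\ch_b(G)$ is a restricted version of $\ch(G[H])$ for $\chi(H)=b$, which is exactly your middle-inequality argument of lifting lists to layers and extracting at least $\chi(H)$ colors per copy, and your first inequality is the standard composition of a $b$-fold coloring of $G$ with a proper $\chi(H)$-coloring of $H$). All three steps check out, including the truncation to exactly $\chi(H)$ colors, which preserves disjointness across edges.
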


Our aim is to bound these coloring numbers with a function of parameters depending   on $G$ and $H$.
For the chromatic number of lexicographic products, the following is trivially true:

\begin{obs}\label{obs:chrom}
$\chi(G[H])\le \chi(G)\chi(H)$.
\end{obs}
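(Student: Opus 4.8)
The plan is to build a proper coloring of $G[H]$ directly from optimal proper colorings of the two factors by taking ordered pairs of colors. First I would fix a proper coloring $c_G \colon V_1 \to \{1,\dots,\chi(G)\}$ of $G$ and a proper coloring $c_H \colon V_2 \to \{1,\dots,\chi(H)\}$ of $H$; these exist by the definition of the chromatic number. Then I would define a coloring $c$ of $G[H]$ by setting $c(v_1,v_2) = (c_G(v_1), c_H(v_2))$ for every vertex $(v_1,v_2) \in V_1 \times V_2$. The number of distinct color pairs used is at most $\chi(G)\cdot\chi(H)$, which is exactly the claimed bound, so it only remains to verify that $c$ is proper.

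The verification follows the two cases in the definition of adjacency in $G[H]$. Suppose $(v_1,v_2)$ and $(v_1',v_2')$ are adjacent. If $(v_1,v_1') \in E_1$, then $c_G(v_1) \neq c_G(v_1')$ because $c_G$ is proper, so the first coordinates of the two color pairs differ and hence $c(v_1,v_2) \neq c(v_1',v_2')$. Otherwise $v_1 = v_1'$ and $(v_2,v_2') \in E_2$, in which case the first coordinates agree but $c_H(v_2) \neq c_H(v_2')$ because $c_H$ is proper, so the second coordinates differ and again $c(v_1,v_2) \neq c(v_1',v_2')$. In both cases adjacent vertices receive distinct colors, so $c$ is a proper coloring of $G[H]$ using at most $\chi(G)\chi(H)$ colors.

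There is no genuine obstacle here: the only point requiring care is to treat the two adjacency types separately, since the two coordinates of the color pair play asymmetric roles. The $G$-color alone resolves all edges coming from $E_1$ (the ``between-copy'' edges), while the $H$-color is needed precisely for the ``within-copy'' edges, where the $G$-colors necessarily coincide. I expect the whole argument to take only a few lines, as it is the straightforward analogue of the standard product-coloring bound.
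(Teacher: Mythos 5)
Your proof is correct and is exactly the standard product-coloring argument the paper has in mind when it declares this observation ``trivially true'' without proof: color $(v_1,v_2)$ by the pair $(c_G(v_1),c_H(v_2))$ and check the two adjacency cases separately. Nothing to add.
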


In general this bound cannot be improved as seen by choosing $G=K_r$ and $H=K_n$ (it can be slightly improved in certain cases \cite{GS1975}).

Recall that the $b$-choice number can be regarded as a restricted version of the choice number of the lexicographic product $G[K_b]$ or more generally of $G[H]$ where $\chi(H)=b$.  For the $b$-choice number we do not know a bound as in Observation \ref{obs:chrom}, yet a simple greedy coloring (of copies of $H$ one by one) shows that:
\begin{obs}\label{obs:restr}
	$\ch_b(G)\le b\Delta(G)$.
\end{obs}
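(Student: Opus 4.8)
The plan is to prove the bound by a greedy coloring that processes the copies of $H$ (equivalently, the vertices of $G$) one at a time, as the phrasing of the observation suggests. Fix a list assignment $L$ on $V(G)$ with $|L(v)| = b\Delta$ for every $v$, where $\Delta = \Delta(G)$, and set the goal of producing $b$-element subsets $C(v) \subseteq L(v)$ that are pairwise disjoint across the edges of $G$. Since we want a $b$-fold $L$-coloring of $G$, it suffices to assign to each vertex $v$ some $b$-element subset of $L(v)$ so that adjacent vertices receive disjoint subsets; this is exactly the requirement of properly coloring the copies of $H$ (with $\chi(H)=b$ colors per copy) one after another, so I would work directly with the sets $C(v)$.

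Next I would fix an ordering $v_1, v_2, \ldots, v_n$ of $V(G)$ and color in this order, at each step choosing $C(v_i) \subseteq L(v_i)$ of size $b$ that is disjoint from the union of the sets already assigned to the earlier neighbors of $v_i$. The key counting step is that each earlier neighbor contributes exactly $b$ forbidden colors, so if $v_i$ has $d_i$ earlier neighbors then at most $b \cdot d_i$ colors of $L(v_i)$ are forbidden and at least $b\Delta - b\cdot d_i$ remain available. Since only $b$ colors are needed for $v_i$, the step succeeds as long as $d_i \le \Delta - 1$, and the greedy then terminates with the desired $b$-fold $L$-coloring.

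The main obstacle is therefore purely to guarantee an ordering in which every vertex has at most $\Delta - 1$ earlier neighbors, i.e.\ at least one \emph{later} neighbor whenever its degree equals $\Delta$. For a connected $G$ that is not $\Delta$-regular I would root at a vertex of degree $<\Delta$ and order by non-increasing distance from that root, so that every other vertex has a strictly closer, hence later, neighbor and thus $d_i \le \Delta - 1$, while the root itself has degree $<\Delta$. This is precisely the refinement that upgrades the trivial greedy bound $\chi(G)\le \Delta+1$ to the Brooks-type bound $\chi(G)\le \Delta$, and I expect this back-degree control to be the delicate point: a naive ordering only yields $d_i \le \Delta$ and hence the weaker $\ch_b(G) \le b(\Delta+1)$, so squeezing the count down to the stated $b\Delta$ is where the care is needed (with the $\Delta$-regular components requiring the genuine Brooks-style argument rather than the simple distance ordering). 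Once the ordering is in place the greedy produces the required disjoint $b$-subsets, giving $\ch_b(G) \le b\Delta(G)$.
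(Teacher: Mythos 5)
There is a genuine gap, and it sits exactly where you parked it: the $\Delta$-regular components, which you defer to ``the genuine Brooks-style argument'' without supplying one. No such argument can exist, because the bound $\ch_b(G)\le b\Delta(G)$ is false precisely for the Brooks exceptions. On $K_{\Delta+1}$ with identical lists of size $a$ one needs $\Delta+1$ pairwise disjoint $b$-element subsets of a single $a$-set, so $\ch_b(K_{\Delta+1})=b(\Delta+1)>b\Delta$ (already $K_3$ with $\Delta=2$ needs $3b>2b$, and $K_2$ needs $2b>b$); likewise $\ch_b(C_{2k+1})\ge b\,\chi_f(C_{2k+1})=b(2k+1)/k>2b=b\Delta$. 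So even a full multifold list version of Brooks' theorem --- itself a serious matter even for $b=1$, where it is the Erd\H{o}s--Rubin--Taylor degree-choosability theorem, and not something to assert in passing for general $b$ --- would necessarily exclude cliques and odd cycles, and the observation as printed (an off-by-one slip in the paper) cannot be proved in full generality at $b\Delta$. Your distance-ordering argument for connected non-$\Delta$-regular graphs is correct and does yield $b\Delta$ for those graphs, but as a proof of the stated observation the proposal has an unfillable hole at the regular components.

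Compared with the paper: the paper's entire ``proof'' is the parenthetical greedy hint (color copies of $H$ one by one), i.e.\ exactly the naive ordering you correctly observe gives only $d_i\le\Delta$ earlier neighbors and hence $\ch_b(G)\le b(\Delta+1)$; nothing in the paper performs your back-degree-$(\Delta-1)$ refinement, and nothing could, by the counterexamples above. Since everything downstream (Corollary~\ref{cor:chb}, the tightness discussion) uses only the order of magnitude $O(b\Delta)$, the sound version of your write-up is your own second paragraph with the requirement $d_i\le\Delta-1$ dropped: any vertex ordering forbids at most $b\Delta$ colors at each step, so lists of size $b(\Delta+1)$ suffice, proving $\ch_b(G)\le b(\Delta(G)+1)$, which is what Observation~\ref{obs:restr} should say (or else it must be restricted to graphs whose components are neither cliques nor odd cycles, where a multifold Brooks-type theorem would still have to be proved separately).
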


Or, equivalently, $\ch_{\chi(H)}(G)\le \Delta(G)\chi(H)$.

For the choice number $\ch(G[H])$ of $G[H]$ no such bound can be expected as shown by $G=K_2$ and $H=E_n$, the empty graph on $n$ vertices. Indeed, for these graphs $\ch(G[H])=\Omega(\log n)$ while $\Delta(G)=\chi(H)=1$.
Still, we are interested in finding a similar upper bound for the choice number of $G[H]$. In order to do that, we study the corresponding problem about the paint number. We present an upper bound on the paint number of the lexicographic product $G[H]$ in terms
of the maximum degree $\Delta(G)$ of $G$, the paint number of $H$ and the
cardinality of $H$. It turns out that the proof can be immediately adjusted for the choice number. This shows that indeed the investigation of the paint number was beneficial in understanding the behaviour of the choice number. Let $\log$ stand for the base $2$ logarithm.

\begin{theorem}\label{thm:main}
For any graph $G$ and any graph  $H$ on $n$ vertices,
\begin{itemize}
\item $\ch(G[H]) \le (4\Delta(G)+2)(\ch(H) +\log n)$,
\item $\olch(G[H]) \le (4\Delta(G)+2)(\olch(H) +\log n)$.
\end{itemize}
\end{theorem}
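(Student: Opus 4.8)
The plan is to prove the statement about the paint number $\olch(G[H])$ directly, via an explicit Painter strategy, and then observe that the identical strategy read offline proves the bound for $\ch(G[H])$ with $\olch(H)$ replaced by $\ch(H)$; since $\olch\ge\ch$ everywhere this loses nothing. So fix lists of size $k=(4\Delta+2)(\olch(H)+\log n)$, write $\Delta=\Delta(G)$, and consider the $1$-fold $k$-painting game on $G[H]$. Label the vertices of $G[H]$ as $(u,x)$ with $u\in V(G)$, $x\in V(H)$, and call $H_u=\{(u,x):x\in V(H)\}$ the $u$-th copy; whenever $uu'\in E(G)$ the copies $H_u,H_{u'}$ are completely joined, so a proper coloring of $G[H]$ is exactly a family of proper $H$-colorings $\varphi_u$ of the copies whose color images $\mathrm{Im}(\varphi_u)$ are pairwise disjoint along edges of $G$. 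Thus Painter runs two tasks in parallel: a within-copy task (produce a legal $H$-coloring of each $H_u$) and a separation task (keep the images of adjacent copies disjoint).

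First I would design the per-copy engine. Inside a single copy I would run an optimal $H$-painting strategy, which by definition succeeds once the copy has been granted $\olch(H)$ usable colors. On top of this I would run a separation potential modeled on the standard proof that $\olch(K_{n,n})\le\log n+O(1)$: to each still-uncolored vertex $(u,x)$ attach the weight $2^{-r(u,x)}$, where $r(u,x)$ is the number of presentations it may still absorb before Lister would win, and let $\Phi_u$ be the sum of these weights over copy $u$. In a round offering color $i$, the presented-but-uncolored vertices that Painter declines have their weights doubled, so to keep $\Phi_u$ from growing Painter must color, inside $H_u$, an $H$-independent set carrying at least half of the offered weight of that copy. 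Since $\Phi_u$ starts at $n\,2^{-k'}$ for a local budget $k'$ while a vertex on the verge of losing the game contributes weight $1$, any copy whose local budget exceeds about $\olch(H)+\log n$ is driven to completion: the $\log n$ term pays for the potential over the $n$ vertices of the copy, and the $\olch(H)$ term pays for the within-copy coloring.

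The between-copy interaction is what produces the factor $4\Delta+2$, and organizing it is the step I would treat as the core of the argument. In one round the single independent set $X_i$ that Painter may commit must simultaneously meet each copy in an $H$-independent set and use a set of copies that is independent in $G$, since a color can be spent on at most an independent set of copies. This second constraint means Painter generally cannot serve every presented copy in a given round; because $G$ has maximum degree $\Delta$, I would choose the copies to serve by a greedy heavy-independent-set selection, which captures at least a $1/(\Delta+1)$ fraction of the offered weight, so each copy is served a bounded fraction of the time. Inflating the palette by the corresponding factor — the careful accounting combining this $\Theta(\Delta)$ loss with the factor $2$ of the separation potential — yields exactly $4\Delta+2$ and restores, for every individual copy, the per-copy guarantee of the previous paragraph, so that every vertex is colored in time.

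The main obstacle, and the reason a naive argument fails, is locality: the bound must be independent of $|V(G)|$, yet the single global potential $\sum_{v}2^{-r(v)}$ starts at $|V(G)|\cdot n\cdot 2^{-k}$, which is useless when $|V(G)|$ is large, and a global union bound or a global application of the Lovász Local Lemma runs into unbounded dependencies through shared colors. The crux is therefore to run the potential copy by copy, bounding each $\Phi_u$ by $n\,2^{-k'}$ separately, while still choosing the served vertices in each round so that they form one independent set of $G[H]$ globally; the degree bound on $G$ is precisely what lets the local per-copy guarantees and the global independent-set selection coexist with only the stated $4\Delta+2$ overhead. Once this bookkeeping is in place, replacing the online $H$-strategy by an offline list-coloring of each copy gives the $\ch$ statement verbatim.
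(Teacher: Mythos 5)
Your architecture matches the paper's --- an outer separation task across copies handled by a greedy heaviest-copy independent-set selection with exponential weights, an inner $H$-painting strategy per copy fed by the outer services, and an offline specialization for $\ch$ --- but the two steps you yourself flag as ``the core'' and ``the crux'' are asserted rather than proved, and one of them is false as stated. The claim ``each copy is served a bounded fraction of the time'' does not follow from the greedy capturing a $1/(\Delta+1)$ fraction of the \emph{global} offered weight: Lister can present a copy alongside a heavier neighbor in every round, so that copy is declined every time it is presented, and under your rule its potential $\Phi_u$ doubles without bound. The whole difficulty is to bound each copy's potential independently of $|V(G)|$; you name this obstacle but supply no mechanism. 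The paper's mechanism is an amortization you are missing: the greedy guarantees that every presented-but-unserved layer has a \emph{served neighbor of at least equal weight}; the paper then builds an infinite tree $T$ with a homomorphism $\phi: T \to G$ rooted at the fixed copy, damps level $l$ by $(2\Delta)^{-l}$, and shows the sum $w_i(T)=\sum_a (2\Delta)^{-l(a)}h_i(V_{\phi(a)})$ is non-increasing, because each layer whose weight grows is dominated by a tree-neighbor whose weight halves, and the bookkeeping closes via $\frac{\Delta-1}{4\Delta+1}\cdot\frac{1}{2\Delta}+\frac{2\Delta}{4\Delta+1}\le\frac12$. That inequality is where $4\Delta+2$ actually comes from (note also that declined vertices must grow only by the small factor $1+\frac{1}{4\Delta+1}$, not double as in your scheme, or the inequality cannot close). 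This yields $h_i(V_x)\le w_i(T)\le w_0(T)=2n$ for \emph{every} copy simultaneously --- exactly the per-copy bound you needed and did not establish.

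A second concrete flaw: you run the separation potential on $G[H]$ directly and require Painter, in a served copy, to color an $H$-independent set carrying at least half that copy's offered weight; for dense $H$ (say $H=K_n$) no such independent set exists, so the potential grows even in served copies. The paper avoids this by decoupling the two levels: the outer game is the $b$-fold game on $G[E_n]$ with $b=\olch(H)$ (resp.\ $b=\ch(H)$), so a served layer has no internal edges and \emph{all} its presented vertices are served, with halving corresponding to the award of one inner permissible color rather than to an actual coloring; a vertex served $b$ times is then colored by the inner winning strategy (or, in the $\ch$ case, by an offline $L_x$-coloring of the copy from its $b$ awarded colors). Your budget split $k'\approx\olch(H)+\log n$ gestures at this decoupling, but a single potential tied to actual colorings cannot support it. In short, the skeleton is right, but the proof has genuine gaps at precisely its load-bearing steps.
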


Note that this theorem generalizes to $b$-fold colorings, for details see Corollary \ref{thm:bfold}.
Applying the theorem with $H=K_b$ we get a similar bound on $\cholb(G)$ as in Observation \ref{obs:restr} we had for $\ch_b(G)$:

\begin{corollary}\label{cor:chb}
	$\cholb(G)\le cb\Delta(G)$ for some constant $c$.
\end{corollary}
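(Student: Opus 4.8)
The plan is to read off the corollary as an immediate specialization of \tref{main} to $H = K_b$. Recall the inequality $\cholb(G) \le \olch(G[K_b])$ noted in the introduction: the $b$-paint number is exactly the paint number of the lexicographic product $G[K_b]$ under the additional restriction that all $b$ vertices of each copy of $K_b$ share the same list. It therefore suffices to bound $\olch(G[K_b])$, and for that I would invoke the second item of \tref{main}.

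Applying that bound with $H = K_b$, the complete graph on $n = b$ vertices whose paint number is $\olch(K_b) = b$, yields
\[
\cholb(G) \le \olch(G[K_b]) \le (4\Delta(G)+2)\,(b + \log b).
\]
The fact $\olch(K_b) = b$ is itself routine: the lower bound follows from $\olch(K_b) \ge \ch(K_b) = b$, and the matching upper bound from the greedy Painter strategy that, in each round, colors the still-uncolored vertex whose list has been depleted the most.

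It remains only to collapse the right-hand side into the advertised form $c\,b\,\Delta(G)$. Since $\log b \le b$ for every $b \ge 1$ we have $b + \log b \le 2b$, and, assuming $G$ has at least one edge so that $\Delta(G) \ge 1$, we have $4\Delta(G)+2 \le 6\Delta(G)$. Multiplying these two estimates gives $\cholb(G) \le 12\,b\,\Delta(G)$, so the claim holds with $c = 12$.

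Honestly there is no real obstacle here: the entire content is carried by \tref{main}, and the corollary is just a matter of plugging in $H = K_b$ and simplifying. The only point worth flagging is the degenerate case $\Delta(G) = 0$, where $G$ is edgeless, $\cholb(G) = b$, yet $c\,b\,\Delta(G) = 0$; the statement is thus implicitly understood for graphs containing an edge, which is the only regime of interest.
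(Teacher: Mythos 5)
Your proof is correct, and it follows exactly the derivation the paper itself sketches in the introduction: apply the second part of Theorem~\ref{thm:main} with $H=K_b$, use $\cholb(G)\le\chol(G[K_b])$ (which the paper states) together with $\chol(K_b)=b$, and absorb the $\log b$ term. Worth noting, though: in Section~2 the paper points out a slightly more direct route, namely that the corollary already follows from Lemma~\ref{lemma:main} by setting $n=1$. Since $G[E_1]=G$ and $\log 1=0$, that lemma gives $\cholb(G)\le(4\Delta(G)+2)b$ outright, i.e.\ $c=6$ for $\Delta(G)\ge 1$, with no need for the reduction $\cholb(G)\le\chol(G[K_b])$, for the fact $\chol(K_b)=b$, or for the estimate $\log b\le b$ (which costs you a factor of $2$, landing at $c=12$). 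Both arguments are sound and ultimately rest on the same lemma, so there is no circularity in your version; the paper's shortcut just yields a cleaner constant. Your closing remark about the degenerate case $\Delta(G)=0$ is a fair observation and applies equally to the paper's formulation.
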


Theorem \ref{thm:main} generalizes a result of Alon about complete multipartite graphs (his proof is based on probabilistic arguments). Let $K_{n*r}$ denote the complete $r$-partite graph with $n$ vertices in each vertex class. Applying Theorem \ref{thm:main} with $G=K_r$ and $H=E_n$ (the empty graph on $n$ vertices), we get the upper bound in Alon's following result:

\begin{theorem}\cite{Alon92}\label{thm:alon}
There exist two positive constants $c_1$ and $c_2$ such that for every $n \ge 2$ and for every $r \ge 2$
$$c_1r \log n \le \ch(K_{n*r}) \le c_2 r log\ n.$$
\end{theorem}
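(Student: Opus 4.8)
The two inequalities come from quite different sources, so I would treat them separately. \emph{Upper bound.} The first thing to notice is that $K_{n*r}$ is exactly the lexicographic product $K_r[E_n]$: its vertex set is $\{1,\dots,r\}\times\{1,\dots,n\}$ and two vertices are adjacent precisely when their first coordinates differ, which is the adjacency rule of $K_r[E_n]$ since $E_n$ is edgeless. I would then simply invoke \tref{main} with $G=K_r$ and $H=E_n$, using $\Delta(K_r)=r-1$ and $\ch(E_n)=1$:
\[
\ch(K_{n*r})=\ch(K_r[E_n])\le (4(r-1)+2)(1+\log n)=(4r-2)(1+\log n).
\]
For $n\ge 2$ we have $\log n\ge 1$, hence $1+\log n\le 2\log n$ and the right-hand side is at most $8r\log n$, so the upper bound holds with $c_2=8$. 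This is the half of the statement the paper actually contributes, and it is immediate once \tref{main} is available.

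\emph{Lower bound (the hard direction).} The plan is the probabilistic method, preceded by a reformulation of colorability. A list assignment $L$ on $K_{n*r}$ with parts $V_1,\dots,V_r$ admits a proper $L$-coloring if and only if there exist pairwise disjoint color sets $S_1,\dots,S_r$ with $L(v)\cap S_i\neq\emptyset$ for every $i$ and every $v\in V_i$: take $S_i$ to be the set of colors used on $V_i$, where disjointness is forced because distinct parts are completely joined and within a part there is no constraint. Equivalently, writing the color universe as $[m]$, colorability means there is a map $\phi\colon[m]\to\{0,1,\dots,r\}$ such that, with $T_i=\phi^{-1}(i)$, every $v\in V_i$ has $L(v)\cap T_i\neq\emptyset$. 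I would then give each of the $rn$ vertices an independent uniformly random $k$-subset of $[m]$, for a suitably chosen $m=\Theta(rk)$, and bound the expected number of maps $\phi$ that work. For fixed $\phi$ with $|T_i|=t_i$, one vertex of $V_i$ misses $T_i$ with probability $\binom{m-t_i}{k}/\binom{m}{k}$, so $\Pr[\phi\text{ works}]=\prod_{i=1}^{r}\bigl(1-\binom{m-t_i}{k}/\binom{m}{k}\bigr)^{n}$, and the first moment is the multinomial sum $\sum_{\vec t}\binom{m}{t_0,\dots,t_r}\prod_i\bigl(1-\binom{m-t_i}{k}/\binom{m}{k}\bigr)^n$ over $t_0+\dots+t_r=m$.

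The governing heuristic is a budget competition: a part assigned $t_i$ colors is met by all $n$ of its random lists only when $t_i\gtrsim (m/k)\log n$, while the parts must share the total $\sum_i t_i\le m$; together these force $r\cdot(m/k)\log n\lesssim m$, i.e.\ $k\gtrsim r\log n$. Hence for $k\le c_1 r\log n$ every $\phi$ must starve some part, the first moment should drop below $1$, and a non-colorable $k$-list assignment exists, giving $\ch(K_{n*r})>k$ and the desired lower bound.

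The main obstacle is making this first-moment estimate rigorous: one must choose $m$ so that the number $(r+1)^m$ of candidate maps is outweighed by their individual failure probabilities, and then show the multinomial sum stays below $1$ for all $k\le c_1 r\log n$ simultaneously. This is delicate because the summand is not of pure product (multinomial-theorem) form, so the tails in $\vec t$ must be controlled by hand. It is worth recording why the probabilistic argument is needed at all: the cruder deterministic assignment ``give every vertex all $k$-subsets of $[m]$'' already exhibits the competition phenomenon, since there each $S_i$ must satisfy $|S_i|\ge m-k+1$ and $\sum_i(m-k+1)\le m$ forces $k\ge m(r-1)/r$; but because it demands each $S_i$ to be almost all of $[m]$, it only yields $\ch(K_{n*r})=\Omega(r\log n/\log r)$, losing a $\log r$ factor. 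Replacing ``all $k$-subsets'' by random $k$-subsets lowers each part's color demand from $\approx m$ to $\approx (m/k)\log n$, and it is exactly this saving that removes the $\log r$ loss and reaches the tight $c_1 r\log n$.
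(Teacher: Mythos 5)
Your upper-bound half is correct and is exactly the paper's argument: the paper proves \emph{only} the upper bound of this theorem, obtaining it by applying Theorem~\ref{thm:main} with $G=K_r$ and $H=E_n$ (using $K_{n*r}=K_r[E_n]$, $\Delta(K_r)=r-1$, $\ch(E_n)=1$), and your arithmetic $(4r-2)(1+\log n)\le 8r\log n$ for $n\ge 2$ is fine. The lower bound is not proved in the paper at all; it is quoted from \cite{Alon92}, with only the remark that Alon's proof is probabilistic. So on the part for which the paper supplies a proof, you match it verbatim.

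Your lower-bound half, however, has a genuine gap, and it is more than the unfinished ``delicate tail estimate'' you acknowledge: the first-moment scheme as you set it up provably cannot close in all regimes the theorem covers. Your reformulation via maps $\phi\colon[m]\to\{0,1,\dots,r\}$ and the formula $\Pr[\phi \text{ works}]=\prod_{i=1}^r\bigl(1-\binom{m-t_i}{k}/\binom{m}{k}\bigr)^n$ are correct, but note that each part has only $n$ vertices, so the probability that a fixed $\phi$ works cannot be very small: $1-\binom{m-t}{k}/\binom{m}{k}$ is increasing in $t$ and equals $k/m$ already at $t=1$, and for near-balanced maps ($t_i\approx m/r$, with $k=c_1 r\ln n$) each part is satisfied with probability about $1-n^{-c_1}$, so all $n$ of its vertices are satisfied with probability about $\exp(-n^{1-c_1})$. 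Since the multinomial count of near-balanced maps is at least $\exp\bigl(m\ln r - r\ln(m+1)\bigr)$ and $m\ge k$ is forced, the expected number of working maps is at least roughly $\exp\bigl(m\ln r - r\ln(m+1) - 2rn^{1-c_1}\bigr)$, which exceeds $1$ whenever $\log r$ is large compared to $n$ (e.g., $r\ge 2^{n}$) --- a regime the theorem explicitly includes, and where the trivial bound $\ch(K_{n*r})\ge\chi(K_{n*r})=r$ also falls short of $c_1 r\log n$ once $n$ is unbounded. Hence a first moment over \emph{all} color-assignment maps is inconclusive precisely where work remains, and any complete proof (Alon's included) needs an additional idea --- a sharper count of the relevant systems, a different event, or a separate treatment of large $r$. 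Conceptually your sketch captures the right mechanism (the budget competition and why random lists beat the ``all $k$-subsets'' construction by a $\log r$ factor), but as written it is a plan whose decisive step fails as stated, not a proof.
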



Alon's result implies for an arbitrary graph $G$ that $\ch(G)\le c_2 \chi(G) \log m$ where $m$ is the number of vertices of $G$. This was proved also for the paint number \cite{Zhu2009}, showing that $\chol(G)\le \chi(G)\ln m+1$, where $\ln$ stands for the natural logarithm. More generally, about the $b$-paint number it was proved that:

\begin{theorem}\cite{Zhu2009}\label{cholbzhu}
	$\cholb(G)\le c\chi(G)(b+\log m)$ for some constant $c$.
\end{theorem}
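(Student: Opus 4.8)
The plan is to exhibit an explicit potential‑guided winning strategy for Painter in the $b$-fold $k$-painting game on $G$ with $k = c\,\chi(G)(b+\log m)$, and the main trick will be to recognize the governing potential as a binomial tail probability. Throughout I fix a proper colouring of $G$ into $\chi := \chi(G)$ independent sets $A_1,\dots,A_\chi$ and set $p = 1/\chi$. At any moment of the game, attach to each vertex $v$ two counters: the number $d(v)\in\{0,\dots,b\}$ of colourings it still needs (initially $b$) and the number $r(v)\in\{0,\dots,k\}$ of presentations it may still receive before Lister wins (initially $k$). Presenting $v$ decreases $r(v)$ by one, colouring $v$ decreases both $r(v)$ and $d(v)$ by one, and Lister wins exactly when some vertex reaches $d(v)\ge 1$ with $r(v)=0$.

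The key idea is to measure danger by $g(r,d) := \Pr[\mathrm{Bin}(r,p)\le d-1]$, the probability of failing to accumulate $d$ successes in $r$ trials of success probability $p$, and to track the potential $\Phi = \sum_v g(r(v),d(v))$. I would first record three features of $g$: that $g(r,0)=0$ (a satisfied vertex is harmless), that $g(0,d)=1$ for $d\ge 1$ (a lost vertex contributes at least $1$), and the Pascal‑type recurrence $g(r,d)=p\,g(r-1,d-1)+(1-p)\,g(r-1,d)$ obtained by conditioning on the first trial. Consequently, if Painter can keep $\Phi<1$ at all times then no vertex can ever be in a losing configuration, and since presentation budgets are finite the game must terminate with every $d(v)=0$, i.e.\ with a Painter win.

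Painter's move, when Lister presents $V_i$, is to colour $X_i = A_{j^\*}\cap V_i$ for the class index $j^\*$ maximizing $\sum_{v\in A_j\cap V_i} b_v$, where $b_v := g(r(v)-1,d(v))-g(r(v)-1,d(v)-1)=\Pr[\mathrm{Bin}(r(v)-1,p)=d(v)-1]\ge 0$ is the benefit of colouring $v$; note $X_i$ is independent because it lies in one colour class. Writing the change as $\Delta\Phi = \sum_{v\in V_i} R_v - \sum_{v\in X_i} b_v$, where $R_v := g(r(v)-1,d(v))-g(r(v),d(v))$ is the forced penalty of a presentation, the recurrence yields the exact identity $R_v = p\,b_v$, while the choice of $j^\*$ together with $b_v\ge 0$ gives $\sum_{v\in X_i} b_v \ge p\sum_{v\in V_i} b_v$; substituting these two facts gives $\Delta\Phi\le 0$. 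I expect this averaging over the $\chi$ colour classes, combined with pinning down the right $g$ so that the penalty‑to‑benefit ratio is exactly $p=1/\chi$, to be the heart of the argument and the only genuinely delicate step: it is precisely where $\chi(G)$ enters, and a less carefully chosen potential would lose the tight dependence on $\chi$.

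It then remains to choose $k$ so that $\Phi_0 = m\,g(k,b)=m\,\Pr[\mathrm{Bin}(k,1/\chi)\le b-1]<1$. Here I would apply a Chernoff lower‑tail estimate: with $k=c\,\chi(b+\log m)$ the mean is $\mu = k/\chi = c(b+\log m)\ge 2b$, so $b-1<\mu/2$ and $\Pr[\mathrm{Bin}(k,1/\chi)\le b-1]\le e^{-\mu/8}\le m^{-c/(8\ln 2)}$; taking $c$ a large enough absolute constant (for instance $c=6$) makes $m\cdot m^{-c/(8\ln 2)}<1$. The degenerate cases $\chi=1$ (colour all presented vertices, so $k=b$ suffices) and $m=1$ are handled separately. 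This establishes $\cholb(G)\le c\,\chi(G)(b+\log m)$. Apart from recognizing $g$ as a binomial tail so that the recurrence produces the exact factor $1/\chi$, everything else is routine bookkeeping and a standard tail bound.
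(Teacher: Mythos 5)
Your proof is correct, and it takes a genuinely different route from the one behind the cited result. The paper does not reprove Theorem~\ref{cholbzhu} (it cites \cite{Zhu2009}), but the argument used there is the same multiplicative-weight scheme that the paper reproduces, with $\Delta(G)$ in place of $\chi(G)$, in the proof of Lemma~\ref{lemma:main}: each vertex carries a weight (initially $1$), Painter colors the color class of maximum total presented weight, a presented-but-uncolored vertex has its weight inflated by a factor $1+\Theta(1/\chi)$, a colored vertex has its weight halved, and one checks that the total weight (initially $m$) never increases; a vertex presented $s$ times and colored $t\le b-1$ times then satisfies $(1+\Theta(1/\chi))^{s-t}2^{-t}\le m$, which bounds $s$ by $O(\chi(b+\log m))$. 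Your Painter uses the same skeleton --- greedily pick the best of the $\chi$ classes with respect to a per-vertex score --- but your potential is different: the pessimistic estimator $g(r,d)=\Pr[\mathrm{Bin}(r,1/\chi)\le d-1]$ is exactly the derandomization of the randomized strategy ``color a uniformly random class,'' and the Pascal recurrence gives the clean identity $R_v=p\,b_v$, so $\Delta\Phi\le 0$ holds with no slack. What this buys: the failure condition becomes precisely the union bound $m\Pr[\mathrm{Bin}(k,1/\chi)\le b-1]<1$, which is the natural threshold for this strategy; for $b=1$ your potential $(1-1/\chi)^{r}$ is Zhu's weight in disguise and recovers $\chi\ln m+O(1)$, and for general $b$ it handles multiplicity exactly where the exponential scheme uses the ad hoc halving heuristic, typically yielding better constants. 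The cost is the extra Chernoff step (your estimate $\mu\ge 2b$, $\delta=\tfrac12$, $c=6$ checks out for $m\ge 2$, with $m=1$ and $\chi=1$ rightly split off), whereas the multiplicative-weight computation is self-contained arithmetic. Two conventions you rely on are fine but worth making explicit: every presented vertex automatically has $r(v)\ge 1$ and $d(v)\ge 1$ (a vertex with $r=0$, $d\ge 1$ means the game has already ended, which your invariant $\Phi<1$ rules out, since such a vertex alone contributes $g(0,d)=1$), and ``Painter wins'' means Lister never reaches a winning position, so beyond finiteness of the presentation budget nothing more about termination needs to be argued.
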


Comparing this result with Corollary \ref{cor:chb} we see that while this upper bound depends on $\chi(G)$ and also on $m$, the size of $G$, our upper bound depends only on $\Delta(G)$. Thus as a side effect of our main results we got a bound on $\cholb(G)$ which is best known for many pairs of $G$ and $b$. Note that, e.g., if $b$ is big compared to $\ln m$ then the bound of Theorem \ref{cholbzhu} remains better then the one in Corollary \ref{cor:chb}.

\bigskip
We continue by showing that Theorem \ref{thm:main} is considerably tight. For simplicity, from now on $\Delta$ denotes $\Delta(G)$. First observe that if $G=K_r$ and $H=K_n$ then $G[H]=K_{rn}$ and so $\olch(G[H])=\ch(G[H])=\ch(K_{rn})=rn=(\Delta+1)\ch(H)=(\Delta+1)\olch(H)$.
On the other hand if $G=K_r$ and $H=E_n$, then $G[H]=K_{n*r}$, $\Delta=r-1$ and $\olch(H)=1$ while the lower bound in Theorem \ref{thm:alon} gives $\olch(G[H])\ge \ch(G[H])=\Omega(\Delta \log n)$. Therefore, the $\Delta(G) \ch(H)$ (resp. $\Delta(G) \olch(H)$) term and the $\Delta(G)\log n$ term in Theorem \ref{thm:main} are both needed, the theorem is tight up to a constant multiplicative factor.

Nevertheless, inspired by the bound of Observation \ref{obs:chrom}, we ask if bounds similar to the ones in Observation \ref{obs:restr} and Theorem \ref{thm:main} hold in which the dependence on $\Delta$ is replaced by a dependence on $\ch(G)$ or $\olch(G)$:

\begin{problem}\label{prob:ch}
	Does there exist a constant $c$ such that	for any graph $G$ and any graph $H$ on $n$ vertices,
	\begin{enumerate}
		\item[(i)] $\ch_b(G) \le c b\ch(G)$,
		\item [(ii)]$\ch(G[H]) \le c\ch(G)(\ch(H) +\log n)$,
		\item [(iii)]$\olch(G[H]) \le c\olch(G)(\olch(H) +\log n)$?
	\end{enumerate}
\end{problem}

 We note that a positive answer to  Problem \ref{prob:ch}(i) is implied by the following well known conjecture of Erd\H os, Rubin and Taylor:

\begin{conjecture}\cite{ERT79}\label{prob:ert}
	If $G$ is $(a,b)$-choosable, then $G$ is $(an ,bn)$-choosable for every $n\in \NN$.
\end{conjecture}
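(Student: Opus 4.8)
The plan is to reduce the conjecture to a \emph{subadditivity} statement about the $b$-choice number and then to isolate the single combinatorial obstruction that blocks a direct proof. Recall that $G$ is $(a,b)$-choosable exactly when $\ch_b(G)\le a$, and that monotonicity in each coordinate is elementary: from any list of size at least $a$ we may discard colours before applying an $(a,b)$-selection, and from any $b$-fold selection we may discard colours to obtain a $b'$-fold one for $b'\le b$. Consequently it suffices to prove $\ch_{bn}(G)\le n\,\ch_b(G)$, since taking $a=\ch_b(G)$ then yields $(an,bn)$-choosability. This in turn would follow from the subadditive inequality $\ch_{b_1+b_2}(G)\le \ch_{b_1}(G)+\ch_{b_2}(G)$, which I would try to establish first for $b_1=b_2$ and then iterate.

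The natural mechanism is a \emph{block decomposition of the lists}. Given a list assignment $L$ with $|L(v)|=an$, I would partition each $L(v)$ into $n$ blocks $L_1(v),\dots,L_n(v)$, each of size $a$, apply the hypothesised $(a,b)$-choosability inside each block to obtain $C_i(v)\subseteq L_i(v)$ with $|C_i(v)|=b$ and $C_i(u)\cap C_i(v)=\emptyset$ for every edge $uv$, and finally set $C(v)=\bigcup_{i=1}^n C_i(v)$. Because the blocks at a fixed vertex are disjoint, $|C(v)|=bn$ as required, and within any single block the edge constraints hold by construction.

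The main obstacle is precisely the \emph{cross-block conflicts}: for an edge $uv$ and indices $i\ne j$ there is nothing preventing a colour in $C_i(u)$ from also lying in $C_j(v)$, since the two selections were made in unrelated blocks and a single colour may occupy block $i$ in $L(u)$ but block $j$ in $L(v)$. This difficulty disappears in one important case: if all lists are equal to a common set $S$, one may use the \emph{same} partition $S=S_1\cup\dots\cup S_n$ at every vertex, so that distinct blocks use disjoint colours and the cross terms vanish automatically. Thus the conjecture is easy for constant list assignments; the entire content lies in coordinating the block structure across vertices whose lists overlap only partially, and it is exactly here that a greedy, per-block argument breaks down.

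To overcome this I would abandon the independent-per-block strategy and look for a \emph{global} selection. One promising route is to phrase the target as a list-colouring of the lexicographic product: using associativity, $G[K_{bn}]=\bigl(G[K_b]\bigr)[K_n]$, so an $(an,bn)$-selection for $G$ is a colouring of $(G[K_b])[K_n]$ under list assignments that are constant on each copy of $K_{bn}$, and the $(a,b)$-hypothesis is the analogous statement for $G[K_b]$. I would then attempt to transfer a colouring up this product, either by a Hall-type matching argument that simultaneously fixes compatible block partitions at all vertices, or by an algebraic (Combinatorial Nullstellensatz) analysis of the graph polynomial whose non-vanishing coefficients encode $b$-fold choosability. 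I expect the genuinely hard step to be exactly the elimination of the cross-block conflicts for arbitrary overlapping lists: this is the crux that keeps the conjecture open, and any proof will have to replace the doomed block-by-block greedy with a mechanism that respects the identities of colours across all vertices at once.
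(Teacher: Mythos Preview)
This statement is a \emph{conjecture}, not a theorem: the paper quotes it from Erd\H{o}s, Rubin and Taylor \cite{ERT79} and explicitly leaves it open, remarking immediately afterward that even the far weaker bound $\ch_b(G)\le p(b)\,\ch(G)$ for some polynomial $p$ is, to the authors' knowledge, unknown. There is therefore no proof in the paper to compare your attempt against.

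Your write-up is not a proof either, and to your credit you are candid about this. The reduction to the inequality $\ch_{bn}(G)\le n\,\ch_b(G)$ is a correct reformulation, and the further reduction to subadditivity $\ch_{b_1+b_2}(G)\le\ch_{b_1}(G)+\ch_{b_2}(G)$ would indeed suffice. You then correctly identify the natural block-decomposition approach, correctly isolate the cross-block conflicts as the obstruction, and correctly note that the constant-list case is trivial for exactly the reason you give. But the proposed routes past the obstruction---a Hall-type argument to globally coordinate block partitions, or the Combinatorial Nullstellensatz applied to the graph polynomial of $G[K_{bn}]$---are only gestures; you do not carry either one to a point where it yields new information, and both ideas are well known without having resolved the question. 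In short, your diagnosis of the difficulty is accurate and matches the state of the art, but there is no proof here, because no proof is known.
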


Indeed, Conjecture \ref{prob:ert} for $b=1$ is equivalent to Problem \ref{prob:ch}(i) with $c=1$ (note that $n$ in Conjecture \ref{prob:ert} plays the role of $b$ in Problem \ref{prob:ch}(i)).
However,   to   our knowledge,  it is already unknown whether or not  there exists a polynomial $p$ such that $\ch_b(G) \le p(b)\ch(G)$.

\section{An upper bound for $\ch(G[H])$ and $\olch(G[H])$}

We denote by $N_G(x)$ the open neighborhood of $x$ in $G$,
i.e., the set containing all the neighbors of $x$ in $G$, and by $N_G[x]$ the closed neighborhood of $x$ in $G$,
i.e., $N_G[x]=N_G(x)\cup \{x\}$.
In $G[H]$,  for  $x\in V(G)$, let
 $$V_x = \{(x,y):y\in H\},$$
 which is called a {\em layer} of $G[H]$. Observe that each $V_x$ induces a copy of $H$.
For $C \subset G[H]$, let  $$C(x) = C \cap V_x,$$
and call $C(x)$  a {\em layer of $C$}.
Let $E_n$ be the empty graph (i.e., edgeless) on $n$ vertices.

In this section, we first give an upper bound for $\chi_{P,b}(G[E_n])$.
Then we use this result to prove Theorem \ref{thm:main}. Note that Corollary \ref{cor:chb} already follows from the following lemma by setting $n=1$.

\begin{lemma}\label{lemma:main}
	For any graph  $G$ and positive integer $n$,    $\chi_{P,b}(G[E_n]) \le (4\Delta(G)+2)(b +\log n)$.
\end{lemma}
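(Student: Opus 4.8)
The plan is to play the $b$-fold $k$-painting game on $G[E_n]$ directly, with $k=(4\Delta(G)+2)(b+\log n)$, by exhibiting a Painter strategy together with a potential (weight) function. The first step is a reduction that strips away everything except the essential decision. Since $E_n$ is edgeless, every layer $V_x$ is an independent set, and a set $X\subseteq V(G[E_n])$ is independent exactly when the collection of layers it meets is independent in $G$. Moreover colouring is never harmful to Painter: a colour only pushes a vertex toward being finished, and Lister may not present a finished vertex. Hence Painter may always colour \emph{all} presented vertices in the layers she selects, so her only real choice in round $i$ is an independent set $I_i$ of layers inside the set $P_i=\{x:V_i\cap V_x\neq\emptyset\}$ of presented layers. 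Every presented vertex in a layer of $I_i$ then gets a colour, and every presented vertex in a layer of $P_i\setminus I_i$ is \emph{skipped}. Writing $s(v)$ for the number of times $v$ has been skipped and $c(v)$ for the number of times it has been coloured, we have $p(v)=s(v)+c(v)$ for the number of presentations, so Painter wins provided she keeps $s(v)$ small enough that each vertex reaches $c(v)=b$ before $p(v)=k$; concretely it suffices to guarantee a skip bound of the form $s(v)\le k-b$ on the way to finishing every vertex.

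The engine of the argument is a doubling potential. I would assign to each unfinished vertex a weight $w(v)=2^{s(v)}$ (for the $b$-fold version, modulated by $c(v)$ so that progress toward $b$ colourings is also recorded), so that a single skip doubles $w(v)$ and a large skip count forces a large weight. Painter's rule is to choose, among the presented layers $P_i$, a \emph{heavy} independent set $I_i$, i.e.\ one maximising (greedily, by the presented weight $\sigma_x=\sum_{v\in V_i\cap V_x}w(v)$) the total weight coloured; in particular $I_i$ is maximal, so every skipped layer has a chosen neighbour. This lets me \emph{charge} the weight increase caused by skipping layer $x$ to the colourings that take place in a neighbouring chosen layer, and to balance increases against decreases round by round.

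The main obstacle, and the place where the factors in the statement are produced, is making this charging both balanced and \emph{local}. Balance is the source of the $4\Delta(G)+2$ factor: a single chosen layer can be the charge-target of up to $\Delta(G)$ skipped neighbours, so the decrease it provides must cover that many increases. The fix is to slow the doubling — penalise with base $1+\Theta(1/\Delta(G))$, equivalently double only every $\Theta(\Delta(G))$-th skip — which turns the per-skip cost into the $(4\Delta(G)+2)$ multiplier while keeping the potential non-increasing under Painter's heavy-independent-set rule. Locality is the genuinely delicate point: a global potential summed over all layers starts at mass $n\,|V(G)|$, which would only yield a $\log(n|V(G)|)$ bound and hence an unwanted $\log|V(G)|$ term. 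To reach $\log n$ with no dependence on the number of layers, the charging must be confined to each layer's closed neighbourhood, so that the total weight any single vertex ever ``sees'' is $O(n)$ (for the ordinary game) rather than $O(n\,|V(G)|)$. Establishing this local boundedness — that each layer's interaction with its $\le\Delta(G)$ neighbouring layers cannot drive its own weight past $O(n\cdot 2^{O(b)})$ — is where the real work lies, and I expect it to require a careful per-neighbourhood (edge-wise) accounting of skips against colourings rather than a single global sum.

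Granting the local bound $w(v)\le O(n\cdot 2^{O(b)})$, the conclusion is immediate: $2^{s(v)}\le w(v)$ gives $s(v)\le \log n+O(b)$, and multiplying by the slowed-doubling factor $4\Delta(G)+2$ yields the presentation threshold $k=(4\Delta(G)+2)(b+\log n)$, so no unfinished vertex can ever be presented $k$ times and Painter wins the $b$-fold $k$-painting game. The $b$-fold aspect enters only through the additive $b$: each vertex needs $b$ colourings, and recording $c(v)$ in the weight shows that the extra colourings cost only an additive $O(b)$ in the exponent, exactly as reflected in the $(b+\log n)$ term; setting $n=1$ recovers Corollary~\ref{cor:chb}. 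The one step I would watch most carefully is the localization, since it is precisely what distinguishes this $\Delta(G)$-based bound from the weaker $\chi(G)\log|V(G)|$-type bound of Theorem~\ref{cholbzhu}.
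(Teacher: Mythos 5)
You have correctly reconstructed the outer shell of the paper's proof: the reduction of Painter's move to choosing an independent set of layers, the greedy selection of a maximal independent set by presented layer weight (so every skipped layer has a chosen neighbour of at least its weight), the multiplicative penalty of base $1+\Theta(1/\Delta)$ per skip (the paper uses $1+\frac{1}{4\Delta+1}$) together with a factor $\frac12$ per colouring, and the final accounting that converts a per-layer weight bound of $O(n\cdot 2^{O(b)})$ into the threshold $k=(4\Delta+2)(b+\log n)$. All of this matches the paper's argument. But the one step you explicitly defer --- ``Granting the local bound $w(v)\le O(n\cdot 2^{O(b)})$'' --- is precisely the heart of the proof, and your proposed route to it (``a careful per-neighbourhood (edge-wise) accounting of skips against colourings'') is not an argument and, as described, would not work. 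The difficulty is that heaviness propagates along arbitrarily long paths in $G$ across rounds: layer $x$ is skipped because a neighbour $y$ is currently heavier, but $y$ may have been pumped up earlier by \emph{its} other neighbours, and so on; an accounting confined to the closed neighbourhood of a single layer cannot see this long-range flow, so it cannot by itself bound $h_i(V_x)$.

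The paper's mechanism for making the bound local is a genuinely separate idea that your proposal is missing: it constructs the infinite universal-cover-like tree $T$ rooted at $x$, with a homomorphism $\phi:T\to G$ mapping each neighbourhood bijectively, assigns the geometrically damped weights $\eta(a)=(2\Delta)^{-l(a)}$ by level, and tracks the potential $w_i(T)=\sum_{a\in T}\eta(a)\,h_i\bigl(V_{\phi(a)}\bigr)$. The damping makes $w_0(T)\le n\sum_{l\ge 0}\Delta^l(2\Delta)^{-l}=2n$ despite the infinite sum, and the greedy-maximality claim (each increasing tree vertex $a$ has a tree-neighbour $b$, chosen and at least as heavy) lets one charge each increase of $\frac{1}{4\Delta+1}h_i(a)$ against half of $h_i(b)$; the level-ratio computation $\frac{1}{4\Delta+1}\bigl(\frac{\Delta-1}{2\Delta}+2\Delta\bigr)\le\frac12$ is exactly what fixes the constant $4\Delta+2$ and shows $w_{i+1}(T)\le w_i(T)$, whence $h_i(V_x)\le w_i(T)\le 2n$ for every round and every layer. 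Without this tree potential (or an equivalent device controlling damped sums over all walks from $x$), the per-layer bound is unproved, and with it the entire conclusion; so the proposal, while strategically on target, has a genuine gap at the decisive step.
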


\begin{proof}
The idea of the proof is similar to the idea used in \cite{Zhu2009}.
Let
$$k= \lceil (4\Delta+2)(b +\log n) \rceil.$$ We shall give a winning strategy for Painter
in the $b$-fold $k$-painting game on $G[E_n]$.

During the game, Painter will keep track of a weighting function, which is dynamic as the weight of
vertices will change during the game. We shall denote by  $h_i(v)$  the   weight of $v$ at the end of the $i$th round.

For a set $X$ of vertices of $G[E_n]$, let  $$h_i(X)  = \sum_{v \in X}h_i(v).$$

At the beginning no vertex is colored and each vertex has weight $h_0(v)=1$.

Assume that in round $i$,  Lister presents a set of vertices, $C$, all colored less than $b$ times.
We construct an independent set $I$ of $G$ recursively as follows:

Initially $I = \emptyset$ and $U=V(G)$.

Repeat the following step
until $U$ becomes empty: choose a vertex $x \in U$ with $$h_{i-1}(C(x)) =
\max \{h_{i-1}(C(y)): y \in U\},$$ let $$I:=I \cup \{x\}, \ U := U-
N_G[x].$$
When this procedure of choosing greedily maximal weight layers of $C$ ends, $I$ is a maximal independent set of $G$.
It follows from the construction that for any $x'\notin I$, there is a
vertex $x \in I\cap N_G(x')$ such that $h_{i-1}(C(x)) \ge h_{i-1}(C(x'))$.

Painter colors the vertices in $\cup_{x \in I}C(x)$. The weight function is
updated as follows:
\[
h_i(v)=
\begin{cases}(1+ \frac {1}{4\Delta+1}) h_{i-1}(v),\quad \ \ &
  v \in C(x), x \notin I,\\
  \frac {1}{2} h_{i-1}(v),\quad \ \ &
  v \in C(x), x \in I,\\
  h_{i-1}(v), \quad \ \ & v \notin C,
\end{cases}
\]

We shall prove that this is a winning strategy for Painter.

We first show that for any vertex $x$ of $G$, $h_i(V_x) \le 2n$.

Assume $x$ is a fixed vertex of $G$. We   construct recursively an infinite rooted tree $T$ and a homomorphism $\phi$ from $T$ to $G$. Initially, let $T_0$ be the tree with a single root vertex $r$. Let $\phi_0(r)=x$.
Assume $T_i$ and a homomorphism $\phi_i: T_i \to G$ have been constructed. We construct $T_{i+1}$ and $\phi_{i+1}$ as follows:
For each leaf vertex $a$ of $T_i$, add $d_G(\phi_i(a))-1$ leaves ($d_G(\phi_i(a))$ leaves in the case $i=0$) adjacent to $a$, and extend $\phi_i$ to these leaves
in such a way that $\phi_{i+1}(N_{T_{i+1}}(a)) = N_G(\phi_i(a))$. In other words, the restriction of $\phi_{i+1}$ to $N_{T_{i+1}}(a)$ is
a bijection from $N_{T_{i+1}}(a)$ to $N_G(\phi_i(a))$.

Observe that $T_i$ is a subtree of $T_{i+1}$ and the homomorphism $\phi_i$ of $T_i$ to $G$ is extended to a homomorphism $\phi_{i+1}$ of $T_{i+1}$ to $G$.
Let $T = \lim_{i \to \infty}T_i$, and let $\phi = \lim_{i \to \infty}\phi_i$. It follows from the definition that for any vertex $a$ of $T$,
$d_T(a) = d_G(\phi(a))$ and $\phi$ is a bijection between $N_T(a)$ and $N_G(\phi(a))$. For each vertex $a$ of $T$, the {\em level} $l(a)$ of $a$ is the
distance from $a$ to the root $r$. In particular, $l(r)=0$. For $a \in T$, let
\[
\eta(a)=
 \frac{1}{(2\Delta)^{l(a)}}.
 \]

Let $$w_i(T) = \sum_{a \in T} \eta(a)h_i(V_{\phi(a)}).$$

Notice that by definition $h_i(V_x)\le w_i(T)$: when $a$ is the root of the tree, then   the corresponding summand is $h_i(V_x)$ .
Since the maximum degree of $T$ is equal to the maximum degree of $G$, we know that the number of vertices of level $l$ is at most $\Delta (\Delta-1)^{l-1} < \Delta^l$.
As $h_0(V_y)=n$ for any vertex $y$ of $G$, we have
$$w_0(T) \le n  \sum_{l=0}^{\infty}  \Delta^{l}  \frac {1}{(2\Delta)^l}  =2n.$$

Next we show that for $i \ge 0$, $w_{i+1}(T) \le w_i(T)$.

Let
\begin{eqnarray*}
A &=& \{a \in V(T): h_{i+1}(V_{\phi(a)}) > h_i(V_{\phi(a)})\}\\
B &=& \{a \in V(T): h_{i+1}(V_{\phi(a)}) < h_i(V_{\phi(a)})\}.
\end{eqnarray*}

For simplicity, for $a \in T$, let $h_i(a) = h_i(C(\phi(a)))$.
It follows from the definition that \begin{eqnarray*}
&& \forall a \in A, h_{i+1}(V_{\phi(a)}) - h_i(V_{\phi(a)}) =    \frac {1}{4\Delta+1}h_i(a),\\
&& \forall b \in B, h_{i+1}(V_{\phi(b)}) - h_i(V_{\phi(b)}) =  - \frac 12 h_i(b).
\end{eqnarray*}

\begin{claim} For each $a \in A$, there exists $b \in B$ such that $a,b$ are adjacent in $T$ and $h_i(a) \le h_i(b)$.
\end{claim}
\begin{proof}
Let $x =\phi(a)$. By our choice of $I$,   $x$ has a neighbor $x' \in I$ for which $h_i(C(x')) \ge h_i(C(x))$.
By our construction of $T$, $a$ has a unique neighbor $b$ with $\phi(b)=x'$.
\end{proof}

Continuing the proof of Lemma \ref{lemma:main}, for $b \in B$, let $$A_b=\{a \in A: ab \in E(T), h_i(a) \le h_i(b)\}.$$

\begin{eqnarray*}
 w_{i+1}(T) - w_i(T)
& =& \sum_{a \in A \cup B}\eta(a)\left(h_{i+1}(V(\phi(a)))-h_i(V(\phi(a)))\right)\\
&=&
\frac {1}{4\Delta+1}\sum_{a \in A}h_i(a)\eta(a)  -\frac 12 \sum_{b\in B}h_i(b) \eta(b)\\
&\le &\sum_{b \in B} \left(\frac {1}{4\Delta+1}\sum_{a \in A_b}h_i(a)\eta(a) - \frac12 h_i(b) \eta(b)\right) \\
&\le & \sum_{b \in B}h_i(b) \left(\frac {1}{4\Delta+1}\sum_{a \in A_b}\eta(a)  -  \frac12 \eta(b)\right)
\end{eqnarray*}

We shall show that $\frac {1}{4\Delta+1}\sum_{a \in A_b}\eta(a)  - \frac12 \eta(b) \le 0$ for all $b \in B$, which will imply that $w_{i+1}(T) - w_i(T) \le 0$.
There is at most one $a' \in A_b$ for which $l(a')=l(b)-1$.
For all other vertices $a \in A_b$, we have $l(a) = l(b)+1$. Thus
$\eta(a') = 2\Delta\eta(b)$, and
$\eta(a) = \frac{1}{2\Delta}\eta(b)$ for $a \in A_b - \{a'\}$ (the vertex $a'$ may not exist).
Therefore,
\begin{eqnarray*}
\frac {1}{4\Delta+1}\sum_{a \in A_b}\eta(a)  - \frac12 \eta(b) &\le& \eta(b)\left( \frac {\Delta-1}{4\Delta+1}   \frac{1}{2\Delta}+ \frac{2\Delta}{4\Delta+1} -      \frac12\right) \\
&\le&0
\end{eqnarray*}
Therefore $w_{i+1}(T) \le w_i(T)$. Hence
$$h_i(V_x) \le w_i(T) \le w_0(T) =2n.$$

Assume $v \in V(G[E_n])$ is contained in layer $V_x$. If at the end of the $i$th round, $v$ has been given $s$ permissible colors and is colored
with $t \le b-1$ colors, then
$$h_i(v) =\left(1+ \frac{1}{4\Delta+1}\right)^{s-t}\left(\frac{1}{2}\right)^t \le h_i(V_x) \le 2n.$$
Hence
$$s \le t+\log_{1+\frac{1}{4\Delta+1}}(2^t2n)=t+\frac{\log(2^t2n)}{\log(1+\frac{1}{4\Delta+1})}
\le b-1+(4\Delta+1)
( b-1+\log 2n)= $$ $$=(4\Delta+2)(b +\log n)- \log n -1\le k-1.$$
(The second inequality above follows from the fact that $1/\log(1+1/x)\le x$ as $2\le(1+1/x)^x$ for $x=4\Delta+1$.)
In other words, if    $k$ permissible  colors have been assigned to a vertex $v$, then $v$ would have been colored $b$ times.
Hence the strategy is   a winning strategy for Painter.
This completes the proof of Lemma  \ref{lemma:main}.
\end{proof}


Now we are ready to prove Theorem \ref{thm:main}.

\begin{proof}[Proof of Theorem \ref{thm:main}.]
Recall that we want to prove that assuming $G$ is a graph of maximum degree $\Delta$, then for any graph $H$ on $n$ vertices,
 $\ch(G[H]) \le (4\Delta(G)+2)(\ch(H) +\log n)$ and
$\olch(G[H]) \le (4\Delta+2)(\olch(H) +\log n)$.

Let   $b=\ch(H)$ and  $k = (4\Delta+2) (b +\log n) $. For any list assignment $L$ of $G[H]$ with lists of size $k$ we shall present an $L$-coloring of $G[H]$.

By Lemma \ref{lemma:main} $\ch_b(G[E_n])\le\cholb(G[E_n])\le k$ and so there exists a $b$-fold $L$-coloring of $G[E_n]$. Now this coloring assigns $b$ colors to each vertex of $G(H)$. Define on the vertices of each layer $V_x$ ($x\in G$) a list $L_x$ where the list of $v\in V_x$ consists of the $b$ colors assigned to $v$ in this $L$-coloring. As $b=\ch(H)$, each copy $G[V_x]$ of $H$ can be $L_x$-colored. The list colorings $L_x$ ($x\in G$) assign a color to each vertex, let this be our final coloring. This coloring by definition assigns a permissible color (by $L$) to each vertex, furthermore by the first round we made sure that if two vertices in different layers are connected then they are colored differently. Finally, in the second round we made sure that if two vertices in the same layer are connected then they are colored differently. Thus this is a proper coloring, and so it is an $L$-coloring, as required.

The proof of the second part of the theorem is similar. Let $b=\olch(H)$ and $k = (4\Delta+2) (b +\log n) $. We shall present a winning
strategy for Painter in the $k$-painting game on $G[H]$.

Assume that in a certain round Lister presents a set $C$ of uncolored vertices
of $G[H]$. Painter's response is determined in  two steps.
  In the first step,   Painter plays the $b$-fold $k$-painting game on $G[E_n]$, using the strategy in the proof of Lemma
\ref{lemma:main}. Assume $I$ is the independent set of $G[E_n]$ chosen by Painter in the first step.
In the second step, for each vertex $x$ of $G$, Painter plays the $b$-painting game on $G[V_x]$, by assuming that
Lister has chosen the set $I \cap V_x$ in this round, using his winning strategy for this game (which exists as $b=\olch(H)$).
Assume for each vertex $x$ of $V(G)$, Painter's choice (in the second step) is $A_x$.
For the $k$-painting game on $G[H]$, Painter's response in this round is $\cup_{x \in V(G)}A_x$.

First we prove that $\cup_{x \in V(G)}A_x$ is an independent set of $G[H]$ contained in $C$. Indeed, as $I$ is an independent set of $G[E_n]$ contained in $C$, we conclude that
the set $\{x: I \cap V_x \ne \emptyset\}$ is an independent set of $G$.  As each $A_x$ is an independent set of $G[H]$ contained in
$V_x \cap I$, we conclude that $\cup_{x \in V(G)}A_x$ is an independent set of $G[H]$ contained in $C$.

It remains to show that every vertex of $G[H]$ will eventually be colored. Assume $v \in V_x$. As $v$ has $k$ permissible colors,
during the first steps, $v$ will be given $b$  colors, i.e.,
$v$ is contained in $b$ of $I$'s that are chosen by Painter in the
first steps. That means for the $b$-painting game on $G[V_x]$, $v$ will be given $b$ permissible colors. As Painter is using a winning strategy for the $b$-painting
game on $G[V_x]$, $v$ will eventually be colored.  Therefore the above strategy is a winning strategy for Painter on the $k$-painting game on $G[H]$ and hence
$\olch(G[H]) \le (4\Delta+2)
(\olch(H)+ \log n)$.
\end{proof}

This proof easily generalizes also for the case of $b$-paint number:

\begin{corollary}\label{thm:bfold}
\label{cor:main} Assume $G$ is a graph of maximum degree $\Delta$. Then
for any graph $H$ on $n$ vertices,
\begin{itemize}
\item $\ch_{b}(G[H]) \le (4\Delta(G)+2) (\ch_{b}(H)+ \log n)$,
\item $\chi_{P,b}(G[H]) \le (4\Delta(G)+2) (\chi_{P,b}(H)+ \log n)$.
\end{itemize}

\end{corollary}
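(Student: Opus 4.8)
The plan is to mirror, essentially verbatim, the two arguments in the proof of Theorem~\ref{thm:main}, replacing the quantity $\ch(H)$ (resp.\ $\olch(H)$) by $\ch_b(H)$ (resp.\ $\cholb(H)$) and replacing ``assign/choose one color per vertex'' by ``assign/choose $b$ colors per vertex.'' Since Lemma~\ref{lemma:main} already bounds the $b'$-fold paint number of $G[E_n]$ for \emph{every} fold $b'$, the only thing that changes is which auxiliary fold is fed into it.

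For the offline inequality I would set $b'=\ch_b(H)$ and $k=(4\Delta+2)(b'+\log n)$. Given a list assignment $L$ of $G[H]$ with $|L(v)|=k$, first apply Lemma~\ref{lemma:main} with fold $b'$: since $\ch_{b'}(G[E_n])\le \chi_{P,b'}(G[E_n])\le k$, there is a $b'$-fold $L$-coloring of $G[E_n]$, which hands each vertex $v$ a set $L'(v)\subseteq L(v)$ of $b'=\ch_b(H)$ colors with $L'(u)\cap L'(v)=\emptyset$ whenever $u,v$ lie in different $G$-adjacent layers. On each layer $V_x$ the assignment $L'$ restricted to the copy of $H$ has all lists of size $\ch_b(H)$, so by definition there is a $b$-fold $L'$-coloring selecting $C(v)\subseteq L'(v)$ with $|C(v)|=b$ and $C(u)\cap C(v)=\emptyset$ for $H$-adjacent $u,v$. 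Collecting the $C(v)$ over all layers gives a $b$-fold $L$-coloring of $G[H]$: cross-layer disjointness comes from the $L'$ sets being disjoint, within-layer disjointness from the $H$-colorings. Hence $\ch_b(G[H])\le k$.

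For the online inequality I would set $b'=\cholb(H)$ and $k=(4\Delta+2)(b'+\log n)$ and describe the same two-step strategy for Painter in the $b$-fold $k$-painting game on $G[H]$, with the two folds shifted. In the first step Painter relays Lister's set $C$ to the \emph{$b'$-fold} $k$-painting game on $G[E_n]$ and runs the Lemma~\ref{lemma:main} strategy to obtain an independent set $I$ of layers. In the second step, for each layer $V_x$ Painter plays the $b$-fold $b'$-painting game on $H$, in which Painter has a winning strategy since $b'=\cholb(H)$, feeding $I\cap V_x$ in as Lister's move, and outputs the union of the layerwise responses. As in Theorem~\ref{thm:main} this union is an independent set of $G[H]$ inside $C$, because $\{x:I\cap V_x\neq\emptyset\}$ is independent in $G$.

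The step I expect to require the most care is verifying the consistency of the three nested games, i.e.\ that the fold budgets line up so that every relayed move is legal and every vertex is eventually colored $b$ times. The key correspondence is that a vertex $v\in V_x$ receives one permissible color in the layer-$H$ game exactly when $v\in I$ in the $G[E_n]$ game; hence once $v$ has entered $I$ its full budget of $b'=\cholb(H)$ times, the winning strategy for the $b$-fold $b'$-painting game on $H$ guarantees that $v$ has been colored $b$ times and is therefore finished in the $G[H]$ game. Conversely, any $v$ still presented by Lister in the $G[H]$ game (colored fewer than $b$ times) has been in $I$ fewer than $b'$ times and colored fewer than $b$ times on its layer, so both relayed moves respect the fold constraints of their respective games. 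Finally, Painter loses the $G[H]$ game only if some $v$ reaches $k$ permissible colors while colored fewer than $b$ times; but by the winning property of the Lemma strategy, $k$ permissible colors force $v$ into $I$ the full $b'$ times, which by the correspondence forces $b$ colorings on its layer, a contradiction. Thus Painter never loses, giving $\cholb(G[H])\le k$ and completing the proof.
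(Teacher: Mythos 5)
Your proposal is correct and is exactly the generalization the paper intends: the paper proves the Corollary by remarking that the proof of Theorem~\ref{thm:main} carries over verbatim, which is precisely what you do --- invoking Lemma~\ref{lemma:main} with auxiliary fold $b'=\ch_b(H)$ (resp.\ $\cholb(H)$) and then performing the layerwise $b$-fold coloring (resp.\ painting). Your careful verification that the fold budgets of the three nested games line up (a vertex colored fewer than $b$ times has been in $I$ fewer than $b'$ times, and $k$ permissible colors force $b'$ memberships in $I$ and hence $b$ colorings) is exactly the bookkeeping the paper leaves implicit.
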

\subsubsection*{Acknowledgment} This research was initiated while the first author enjoyed the hospitality of Zhejiang Normal University.

\bibliography{choosability}

\bibliographystyle{plain}

\end{document}